\documentclass[11pt]{amsart}
\usepackage{amssymb}
\usepackage{amsthm}
\usepackage{amsmath}
\usepackage{amsfonts}
\usepackage[mathscr]{eucal}
\usepackage{enumerate}
\usepackage[latin2]{inputenc}

\addtolength{\evensidemargin}{-2cm}
\addtolength{\oddsidemargin}{-2cm}
\addtolength{\textwidth}{4cm}

\newtheorem{thm}{Theorem}[section]

\newtheorem{lem}[thm]{Lemma}

\theoremstyle{definition}
\newtheorem{defn}[thm]{Definition}
\theoremstyle{remark}

\numberwithin{equation}{section}

\begin{document}
\title[A characterization of the relative entropies]
{A characterization of the relative entropies}

\author[E.~Gselmann]{Eszter Gselmann}
\address{
Department of Analysis\\
Institute of Mathematics\\
University of Debrecen\\
P. O. Box: 12.\\
Debrecen\\
H--4010\\
Hungary}

\email{gselmann@math.klte.hu}

\author[Gy.~Maksa]{Gyula Maksa}
\address{
Department of Analysis\\
Institute of Mathematics\\
University of Debrecen\\
P. O. Box: 12.\\
Debrecen\\
H--4010\\
Hungary}

\email{maksa@math.klte.hu}
\begin{flushright}
{\tt Manuscript\\
\today}
\end{flushright}

\begin{abstract}
In this note we give a characterization of a family of relative entropies on open domain
depending on a real parameter $\alpha$ based on recursivity and symmetry. In the cases $\alpha=1$ and
$\alpha=0$ we use additionally a weak regularity assumption while in the other cases
no regularity assumptions are made at all.
\end{abstract}

\thanks{This research has been supported by the Hungarian Scientific Research Fund
(OTKA) grants NK 68040 and K 62316. }
\subjclass{94A17, 39B82, 39B72}
\keywords{Shannon relative entropy, Tsallis relative entropy,
relative information measure.}

\dedicatory{Dedicated to Professor Antal J\'arai on his sixtieth birthday}
\maketitle

\section{Introduction and preliminaries}

Throughout this paper $\mathbb{N}$, $\mathbb{R}$, and $\mathbb{R}_{+}$ will denote the sets of all
positive integers, real numbers, and positive real numbers, respectively. For all $2\leq n\in\mathbb{N}$ let
\[
\Gamma_{n}^{\circ}=\left\{(p_{1}, \ldots, p_{n})\in \mathbb{R}^{n} \vert p_{i}\in\mathbb{R}_{+}, i=1, \ldots, n, \sum_{i=1}^{n}p_{i}=1 \right\}
\]
and
\[
\Gamma_{n}=\left\{(p_{1}, \ldots, p_{n})\in \mathbb{R}^{n} \vert p_{i}\geq 0, i=1, \ldots, n, \sum_{i=1}^{n}p_{i}=1 \right\}.
\]
Furthermore, for a fixed $\alpha\in\mathbb{R},$ define the function
$D_{n}^{\alpha}(\cdot\vert\cdot):\Gamma_{n}^{\circ}\times\Gamma_{n}^{\circ}\rightarrow\mathbb{R}$ by
\begin{equation}
D_{n}^{\alpha}(p_{1}, \ldots, p_{n}\vert q_{1}, \ldots, q_{n})=
-\sum_{i=1}^{n}p_{i}\ln_{\alpha}\left(\frac{q_{i}}{p_{i}}\right)
\end{equation}
where
\[
\mathrm{ln}_{\alpha}(x)=\left\{
\begin{array}{lcl}
\frac{x^{1-\alpha}-1}{1-\alpha},& \text{if}& \alpha\neq 1 \\
\ln(x), & \text{if}& \alpha=1.
\end{array}
\right.
\]
The sequence $(D_{n}^{\alpha})$ is called the Shannon relative entropy (or Kullback-Leibler entropy or Kullback's
directed divergence) if $\alpha=1$, and the Tsallis relative entropy if $\alpha\neq 1$, respectively. $(D_{n}^{1})$
is introduced and extensively discussed in Kullback \cite{Kul59} and Acz\'el--Dar\'oczy \cite{AD75}, respectively.
For $0\leq\alpha\neq 1$, $(D_{n}^{\alpha})$ was introduced and discussed in Shiino \cite{Shi98}, Tsallis \cite{Tsa98},
and Rajagopal--Abe \cite{RA99} from physical point of view, and in Furuichi--Yanagi--Kuriyama \cite{FYK04} and
Furuichi \cite{Fur05} from mathematical point of view, respectively. In \cite{Fur05} and also in Hobson \cite{Hob69},
several fundamental properties of $(D_{n}^{\alpha})$ are listed and it is proved that some of them together determine
$(D_{n}^{\alpha})$, up to a constant factor.

In this note, we follow the method of the basic references \cite{AD75} and Ebanks--Sahoo--Sander \cite{ESS98} of
investigating characterization problems of information measures. We prove a characterization theorem similar to
that of \cite{Hob69} and \cite{Fur05}, and we point out that the regularity conditions (say, continuity) can be
avoided if $\alpha\notin \{0,1\}$, and can essentially be weakened if $\alpha\in \{0,1\}$.

In what follows, a sequence $(I_{n})$ of real-valued functions $I_{n}, (n\geq 2)$ on $\Gamma_{n}^{\circ}\times\Gamma_{n}^{\circ}$
or on $\Gamma_{n}\times\Gamma_{n}$ is called a \emph{relative information measure} on the open or closed domain, respectively.
Our characterization theorem for the Shannon and the Tsallis relative entropies will be based on
the following two properties.

\begin{defn}
Let $\alpha\in\mathbb{R}$. The relative information measure $(I_{n})$ is \emph{$\alpha$--recursive} on the open or closed domain,
if for any $n\geq 3$ and $(p_{1}, \ldots, p_{n}), (q_{1}, \ldots, q_{n})\in\Gamma_{n}^{\circ}$ or $\Gamma_{n}$, respectively,
the identity
\begin{multline*}
I_{n}\left(p_{1}, \ldots, p_{n}\vert q_{1}, \ldots, q_{n}\right)\\
=I_{n-1}\left(p_{1}+p_{2}, p_{3}, \ldots, p_{n}\vert q_{1}+q_{2}, q_{3}, \ldots, q_{n}\right)\\
+(p_{1}+p_{2})^{\alpha}(q_{1}+q_{2})^{1-\alpha}
I_{2}\left(\frac{p_{1}}{p_{1}+p_{2}}, \frac{p_{2}}{p_{1}+p_{2}}\vert
\frac{q_{1}}{q_{1}+q_{2}}, \frac{q_{2}}{q_{1}+q_{2}}\right)
\end{multline*}
holds. We say that $(I_{n})$ is \emph{3-semisymmetric} on the open or closed domain, if
\[
I_{3}\left(p_{1}, p_{2}, p_{3}\vert q_{1}, q_{2}, q_{3}\right)=
I_{3}\left(p_{1}, p_{3}, p_{2}\vert q_{1}, q_{3}, q_{2}\right)
\]
is fulfilled for all $(p_{1}, p_{2}, p_{3}), (q_{1}, q_{2}, q_{3})\in\Gamma_{3}^{\circ}\,\, \text{or}\,\, \Gamma_{3}$,
respectively.
\end{defn}
The following lemma shows how the initial element of an $\alpha$--recursive relative information measure $(I_{n})$ determines
$(I_{n})$ itself.
\begin{lem}\label{L1.1}
Let $\alpha\in\mathbb{R}$ and assume that the relative information measure
$(I_{n})$ is $\alpha$--recursive on the open domain and define the function $f: ]0,1[^{2}\to\mathbb{R}$ by
\[
f(x,y)=I_{2}(1-x,x \vert 1-y,y). \qquad (x,y\in ]0, 1[)
\]
Then, for all $n\geq 3$ and for arbitrary,
$(p_{1}, \ldots, p_{n}), (q_{1}, \ldots, q_{n})\in\Gamma_{n}^{\circ}$
\begin{multline*}
I_{n}(p_{1}, \ldots, p_{n} \vert q_{1}, \ldots, q_{n})\\
=\sum_{i=2}^{n}(p_{1}+p_{2}+\ldots+p_{i})^{\alpha}(q_{1}+q_{2}+\ldots+q_{i})^{1-\alpha}
f\left(\frac{p_{i}}{p_{1}+p_{2}+\ldots+p_{i}},\frac{q_{i}}{q_{1}+q_{2}+\ldots+q_{i}}\right)
\end{multline*}
holds.
\end{lem}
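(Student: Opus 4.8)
The plan is to argue by induction on $n$, peeling off the first two coordinates at each step via the $\alpha$-recursivity. Before starting I would record one preliminary remark: the $I_2$-factor occurring in the recursivity identity is already an instance of $f$. Indeed, because $\frac{p_1}{p_1+p_2}=1-\frac{p_2}{p_1+p_2}$ and $\frac{q_1}{q_1+q_2}=1-\frac{q_2}{q_1+q_2}$, the definition of $f$ yields
\[
I_2\left(\frac{p_1}{p_1+p_2}, \frac{p_2}{p_1+p_2} \,\vert\, \frac{q_1}{q_1+q_2}, \frac{q_2}{q_1+q_2}\right)=f\left(\frac{p_2}{p_1+p_2},\frac{q_2}{q_1+q_2}\right),
\]
so the recursivity can be read entirely in terms of $f$ and $I_{n-1}$. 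I would also note that the claimed formula is already valid for $n=2$: since then $p_1+p_2=q_1+q_2=1$, its right-hand side collapses to $f(p_2,q_2)=I_2(p_1,p_2\vert q_1,q_2)$, which is just the definition of $f$. This serves as the base of the induction.

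For the inductive step I would assume the formula for some $n-1\geq 2$ and take $(p_1,\ldots,p_n),(q_1,\ldots,q_n)\in\Gamma_n^{\circ}$. Writing $P_k=p_1+\cdots+p_k$ and $Q_k=q_1+\cdots+q_k$, the $\alpha$-recursivity together with the remark above gives
\[
I_n(p_1,\ldots,p_n\vert q_1,\ldots,q_n)=I_{n-1}(P_2,p_3,\ldots,p_n\vert Q_2,q_3,\ldots,q_n)+P_2^{\,\alpha}Q_2^{\,1-\alpha}f\left(\frac{p_2}{P_2},\frac{q_2}{Q_2}\right).
\]
The contracted tuples $(P_2,p_3,\ldots,p_n)$ and $(Q_2,q_3,\ldots,q_n)$ again lie in $\Gamma_{n-1}^{\circ}$ (all entries are positive and they still sum to $1$), so the induction hypothesis applies to the first summand.

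The decisive bookkeeping step is then to re-index. The $i$-th partial sum of $(P_2,p_3,\ldots,p_n)$ equals $P_{i+1}$, and its $i$-th entry equals $p_{i+1}$ for $i\geq 2$, with the analogous statements on the $q$-side. Hence, after substituting the induction hypothesis and setting $j=i+1$, the first summand becomes $\sum_{j=3}^{n}P_j^{\,\alpha}Q_j^{\,1-\alpha}f\left(\frac{p_j}{P_j},\frac{q_j}{Q_j}\right)$; adding the leftover term $P_2^{\,\alpha}Q_2^{\,1-\alpha}f\left(\frac{p_2}{P_2},\frac{q_2}{Q_2}\right)$ supplies precisely the missing $j=2$ summand and produces $\sum_{j=2}^{n}P_j^{\,\alpha}Q_j^{\,1-\alpha}f\left(\frac{p_j}{P_j},\frac{q_j}{Q_j}\right)$, which is the asserted identity. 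I expect the only point requiring genuine care to be exactly this index shift, namely verifying that the partial sums of the contracted $(n-1)$-tuple coincide with the partial sums $P_j,Q_j$ of the original data, so that the telescoping is exact. Beyond that the argument is purely algebraic, with no analytic or regularity input needed, since $\alpha$-recursivity is itself an algebraic identity.
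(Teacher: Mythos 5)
Your proof is correct and follows essentially the same route as the paper: induction on $n$, using the $\alpha$-recursivity to peel off the first two coordinates and the index shift on partial sums to absorb the extra term. The only cosmetic difference is that you anchor the induction at $n=2$ (where the formula degenerates to the definition of $f$), whereas the paper verifies $n=3$ directly as the base case; the content is the same.
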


\begin{proof}
The proof runs by induction on $n$. If we use the $\alpha$--recursivity of
$(I_{n})$ and the definition of the function $f$, we obtain that
\begin{multline*}
I_{3}(p_{1}, p_{2}, p_{3}\vert q_{1}, q_{2}, q_{3})\\
=I_{2}(p_{1}+p_{2}, p_{3}\vert q_{1}+q_{2}, q_{3})+
(p_{1}+p_{2})^{\alpha}(q_{1}+q_{2})^{1-\alpha}
I_{2}\left(\frac{p_{1}}{p_{1}+p_{2}}, \frac{p_{2}}{p_{1}+p_{2}}\bigg\vert\frac{q_{1}}{q_{1}+q_{2}}, \frac{q_{2}}{q_{1}+q_{2}}\right)\\
=\sum_{i=2}^{3}(p_{1}+\ldots+p_{i})^{\alpha}(q_{1}+\ldots+q_{i})^{1-\alpha}
f\left(\frac{p_{i}}{p_{1}+\ldots p_{i}},\frac{q_{i}}{q_{1}+\ldots+q_{i}}\right)
\end{multline*}
is fulfilled for all $(p_{1}, p_{2}, p_{3}), (q_{1}, q_{2}, q_{3})\in\Gamma^{\circ}_{3}$, that is,
the statement is true for $n=3$. Assume now that the statement holds for some $3<n\in\mathbb{N}$. We will prove that
in this case the proposition holds also for $n+1$. Let $(p_{1}, \ldots, p_{n+1}), (q_{1}, \ldots, q_{n+1})\in\Gamma_{n}^{\circ}$ be
arbitrary. Then, the $\alpha$--recursivity and the induction hypothesis together imply that
\begin{multline*}
I_{n+1}(p_{1}, \ldots, p_{n+1} \vert q_{1}, \ldots, q_{n+1})\\
=I_{n}(p_{1}+p_{2}, \ldots, p_{n+1}\vert q_{1}+q_{2}, \ldots, q_{n+1})+
(p_{1}+p_{2})^{\alpha}(q_{1}+q_{2})^{1-\alpha}
I_{2}\left(\frac{p_{1}}{p_{1}+p_{2}}, \frac{p_{2}}{p_{1}+p_{2}}\bigg\vert\frac{q_{1}}{q_{1}+q_{2}}, \frac{q_{2}}{q_{1}+q_{2}}\right)\\
=\sum_{n=3}^{n+1}((p_{1}+p_{2})+p_{3}\ldots+p_{i})^{\alpha}((q_{1}+q_{2})+p_{3}+\ldots+q_{i})^{1-\alpha}
f\left(\frac{p_{i}}{(p_{1}+p_{2})+\ldots+p_{i}},\frac{q_{i}}{(q_{1}+q_{2})+\ldots+q_{i}}\right)\\
+(p_{1}+p_{2})^{\alpha}(q_{1}+q_{2})^{1-\alpha}
I_{2}\left(\frac{p_{1}}{p_{1}+p_{2}}, \frac{p_{2}}{p_{1}+p_{2}}\bigg\vert\frac{q_{1}}{q_{1}+q_{2}}, \frac{q_{2}}{q_{1}+q_{2}}\right)\\
=\sum_{i=2}^{n+1}(p_{1}+p_{2}+\ldots+p_{i})^{\alpha}(q_{1}+q_{2}+\ldots+q_{i})^{1-\alpha}
f\left(\frac{p_{i}}{p_{1}+p_{2}+\ldots+p_{i}},\frac{q_{i}}{q_{1}+q_{2}+\ldots+q_{i}}\right),
\end{multline*}
that is, the statement holds for $n+1$ instead of $n$, which ends the proof.
\end{proof}
\section{The characterization}
We begin with the following
\begin{thm}
For any $\alpha\in\mathbb{R}$ the relative entropy $(D_{n}^{\alpha})$ is an
$\alpha$--recursive relative information measure.
\end{thm}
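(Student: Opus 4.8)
The plan is to verify the $\alpha$--recursivity identity by direct substitution into the definition of $D_n^\alpha$, treating the cases $\alpha\neq 1$ and $\alpha=1$ separately, since $\ln_\alpha$ is given by different formulas. The first step, for $\alpha\neq 1$, is to rewrite $D_n^\alpha$ in a more transparent form. Expanding $p_i\ln_\alpha(q_i/p_i)=\frac{1}{1-\alpha}\left(p_i^\alpha q_i^{1-\alpha}-p_i\right)$ and using $\sum_{i=1}^n p_i=1$, one obtains
\[
D_n^\alpha(p_1,\ldots,p_n\vert q_1,\ldots,q_n)=\frac{1}{1-\alpha}\left(1-\sum_{i=1}^n p_i^\alpha q_i^{1-\alpha}\right),
\]
so the whole measure is governed by the single sum $S_n=\sum_{i=1}^n p_i^\alpha q_i^{1-\alpha}$. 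This reduction is what makes the remaining computation mechanical.

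Next I would compute the two terms on the right-hand side of the recursivity identity. The first term $D_{n-1}^\alpha$ replaces the contribution $p_1^\alpha q_1^{1-\alpha}+p_2^\alpha q_2^{1-\alpha}$ of the first two indices by the single aggregated term $(p_1+p_2)^\alpha(q_1+q_2)^{1-\alpha}$. The crucial observation is that the weight factor attached to $D_2^\alpha$ is chosen precisely so that, by homogeneity of the power functions,
\[
(p_1+p_2)^\alpha(q_1+q_2)^{1-\alpha}\left(\frac{p_1}{p_1+p_2}\right)^\alpha\left(\frac{q_1}{q_1+q_2}\right)^{1-\alpha}=p_1^\alpha q_1^{1-\alpha},
\]
and similarly for the second index. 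Hence the second right-hand term restores exactly the separated contributions $p_1^\alpha q_1^{1-\alpha}+p_2^\alpha q_2^{1-\alpha}$ while subtracting the aggregated term $(p_1+p_2)^\alpha(q_1+q_2)^{1-\alpha}$. Adding the two right-hand terms, the aggregated term cancels and one recovers $\frac{1}{1-\alpha}(1-S_n)$, which is the left-hand side.

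For $\alpha=1$ the same scheme applies with $\ln$ in place of the power functions; here the weight reduces to $p_1+p_2$ and the key cancellation is the logarithmic identity $\ln\frac{a}{b}=\ln\frac{a}{c}+\ln\frac{c}{b}$, which lets the term $(p_1+p_2)\ln\frac{p_1+p_2}{q_1+q_2}$ coming from $D_{n-1}^1$ annihilate the matching term produced by expanding $(p_1+p_2)\,D_2^1$. I do not expect any genuine obstacle, since the statement is an identity whose proof is a finite computation. The only point requiring care is the bookkeeping of the weight factor and the verification that homogeneity (for $\alpha\neq 1$) and logarithm splitting (for $\alpha=1$) make the intermediate aggregated term drop out; everything else is routine algebra.
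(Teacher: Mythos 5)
Your proposal is correct: both cases check out, and the key cancellation you identify (the aggregated term $(p_1+p_2)^\alpha(q_1+q_2)^{1-\alpha}$, resp.\ $(p_1+p_2)\ln\frac{p_1+p_2}{q_1+q_2}$, dropping out) is exactly what makes the identity hold. Your route differs from the paper's in organization. You split into the cases $\alpha\neq 1$ and $\alpha=1$: for $\alpha\neq 1$ you exploit the closed form
\[
D_n^\alpha(p_1,\ldots,p_n\vert q_1,\ldots,q_n)=\frac{1}{1-\alpha}\Bigl(1-\sum_{i=1}^n p_i^\alpha q_i^{1-\alpha}\Bigr),
\]
after which recursivity is a one-line telescoping of power sums via homogeneity, and for $\alpha=1$ you use the standard additivity of $\ln$. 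The paper instead runs a single computation valid for every $\alpha\in\mathbb{R}$ at once, built on the deformed-logarithm identities $\ln_\alpha(xy)=\ln_\alpha(x)+\ln_\alpha(y)+(1-\alpha)\ln_\alpha(x)\ln_\alpha(y)$ and $\ln_\alpha(1/x)=-x^{\alpha-1}\ln_\alpha(x)$, which reduce the weighted $D_2^\alpha$ term directly to $D_n^\alpha - D_{n-1}^\alpha$ without ever distinguishing $\alpha=1$. Your approach buys transparency --- the $\alpha\neq 1$ case becomes essentially trivial, and each case uses only elementary algebra --- at the price of doing the argument twice; the paper's approach buys uniformity (no case split, $\alpha=1$ handled on the same footing), at the price of heavier bookkeeping with the $\ln_\alpha$ product formula. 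Both are legitimate direct verifications, and either would serve as a proof of the theorem.
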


\begin{proof}
In the proof, we will use several times the identities
\setlength\arraycolsep{2pt}
\begin{eqnarray*}
\ln_{\alpha}(xy)&=&\ln_{\alpha}(x)+\ln_{\alpha}(y)+(1-\alpha)\ln_{\alpha}(x)\ln_{\alpha}(y)\\
\ln_{\alpha}\left(\frac{1}{x}\right)&=&-x^{\alpha-1}\ln_{\alpha}(x).
\end{eqnarray*}
which hold for all $\alpha\in\mathbb{R}$ and $x,y\in \mathbb{R}_{+}$. Let $n\geq 3$ and
$(p_{1}, \ldots, p_{n}), (q_{1}, \ldots, q_{n})\in\Gamma_{n}^{\circ}$ be arbitrary. Then
\begin{multline*}
(p_{1}+p_{2})^{\alpha}(q_{1}+q_{2})^{1-\alpha}D_{2}\left(\frac{p_{1}}{p_{1}+p_{2}}, \frac{p_{2}}{p_{1}+p_{2}}\bigg\vert\frac{q_{1}}{q_{1}+q_{2}}, \frac{q_{2}}{q_{1}+q_{2}}\right) \\
=(p_{1}+p_{2})^{\alpha}(q_{1}+q_{2})^{1-\alpha}
\left(-\frac{p_{1}}{p_{1}+p_{2}}\ln_{\alpha}\left(\frac{p_{1}+p_{2}}{q_{1}+p_{2}}\frac{q_{1}}{p_{1}}\right)
-\frac{p_{2}}{p_{1}+p_{2}}\ln_{\alpha}\left(\frac{p_{1}+p_{2}}{q_{1}+q_{2}}\frac{q_{2}}{p_{2}}\right)\right)\\
=(p_{1}+p_{2})^{\alpha}(q_{1}+q_{2})^{1-\alpha}
\left(-\ln_{\alpha}\left(\frac{p_{1}+p_{2}}{q_{1}+q_{2}}\right)
+\left(1+(1-\alpha)\ln_{\alpha}\left(\frac{p_{1}+p_{2}}{q_{1}+q_{2}}\right)\right)\right.
\times \\
\times \left.
\left(-\frac{p_{1}}{p_{1}+p_{2}}\ln_{\alpha}\left(\frac{q_{1}}{p_{1}}\right)
-\frac{p_{2}}{p_{1}+p_{2}}\ln_{\alpha}\left(\frac{q_{2}}{p_{2}}\right)\right)\right)
\\
=
(p_{1}+p_{2})\ln_{\alpha}\left(\frac{q_{1}+q_{2}}{p_{1}+p_{2}}\right)
+
\left[\left(\frac{q_{1}+q_{2}}{p_{1}+p_{2}}\right)^{1-\alpha}-(1-\alpha)\ln_{\alpha}\left(\frac{q_{1}+q_{2}}{p_{1}+p_{2}}\right)\right]
\left[-p_{1}\ln_{\alpha}\frac{q_{1}}{p_{1}}-p_{2}\ln_{\alpha}\frac{q_{2}}{p_{2}}\right]
\\
=
(p_{1}+p_{2})\ln_{\alpha}\left(\frac{q_{1}+q_{2}}{p_{1}+p_{2}}\right)-p_{1}\ln_{\alpha}\left(\frac{q_{1}}{p_{1}}\right)-
p_{2}\ln_{\alpha}\left(\frac{q_{2}}{p_{2}}\right)
\\
=
D_{n}(p_{1}, \ldots, p_{n}\vert q_{1}, \ldots, q_{n})
-
D_{n-1}(p_{1}+p_{2}, \ldots, p_{n}\vert q_{1}+q_{2}+\ldots, q_{n}).
\end{multline*}
Therefore the relative entropy $(D_{n}^{\alpha})$ is $\alpha$--recursive, indeed.
\end{proof}

Obviously $(D_{n}^{\alpha})$ is 3-semisymmetric, and for arbitrary $\gamma\in\mathbb{R},\,\,$
$(\gamma D_{n}^{\alpha})$ is $\alpha$--recursive and
3-semisymmetric, as well. Before dealing with the converse we need two lemmas about \emph{logarithmic} functions.
A function $\ell:\mathbb{R}_{+}\to\mathbb{R}$ is logarithmic if $\ell(xy)=\ell(x)+\ell(y)$ for all $x,y\in\mathbb{R}_{+}.$ If a logarithmic function $\ell$
is bounded above or below on a set of positive Lebesgue measure then $\ell(x)=c\ln(x)$ for all $x\in\mathbb{R}_{+}$ with some $c\in\mathbb{R}$
(see \cite{Kuc85}, Theorem 5 and Theorem 8 on pages 311, 312). The concept of \emph{real derivation} will also be needed. The
function $d:\mathbb{R}\to\mathbb{R}$ is a real derivation if it is both \emph{additive}, i.e. $d(x+y)=d(x)+d(y)$ for all $x,y\in\mathbb{R}$, and satisfies
the functional equation $d(xy)=xd(y)+yd(x)$ for all $x,y\in\mathbb{R}$. It is somewhat surprising that there are non-identically zero
real derivations (see \cite{Kuc85}, Theorem 2 on page 352). If $d$ is a real derivation then the function
$x\mapsto \frac{d(x)}{x}, x\in\mathbb{R}_{+}$ is logarithmic. Therefore it is easy to see that the real derivation is identically zero
if it is bounded above or below on a set of positive Lebesgue measure.

\begin{lem}\label{L2.2}
Suppose that the logarithmic function $\ell:\mathbb{R}_{+}\to \mathbb{R}$ satisfies the equality
\begin{equation}\label{eq:2.1}
x\ell(x)+(1-x)\ell(1-x)=0. \qquad (x\in ]0,1[)
\end{equation}
Then there exists a real derivation $d:\mathbb{R}\to\mathbb{R}$ such that
\begin{equation}\label{eq:2.2}
x\ell(x)=d(x). \qquad (x\in \mathbb{R}_{+})
\end{equation}
\end{lem}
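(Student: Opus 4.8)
The plan is to show that the function $d$ defined by $d(x) = x\ell(x)$ on $\mathbb{R}_{+}$ is the restriction to the positive reals of a real derivation, and then to extend it oddly to all of $\mathbb{R}$. First I would record that $\ell(1) = 0$ (since $\ell(1) = \ell(1\cdot 1) = 2\ell(1)$), and rewrite the hypothesis \eqref{eq:2.1} in terms of $d$: dividing by nothing and reading it directly, it says exactly
\[
d(x) + d(1-x) = 0. \qquad (x \in \,]0,1[\,)
\]
The multiplicative (Leibniz) rule on the positive cone is then immediate from the logarithmic property: for $x, y \in \mathbb{R}_{+}$,
\[
d(xy) = xy\,\ell(xy) = xy\bigl(\ell(x) + \ell(y)\bigr) = y\,(x\ell(x)) + x\,(y\ell(y)) = y\,d(x) + x\,d(y).
\]
So the only substantial point is to establish additivity, and this is where I expect the real work to lie.

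For additivity on $\mathbb{R}_{+}$ I would use a homogeneity/scaling argument, which is the heart of the proof. Given $a, b \in \mathbb{R}_{+}$, put $s = a+b$ and write $a = s\cdot\tfrac{a}{s}$, $b = s\cdot\tfrac{b}{s}$. Applying the Leibniz rule just proved,
\[
d(a) + d(b) = s\Bigl(d\bigl(\tfrac{a}{s}\bigr) + d\bigl(\tfrac{b}{s}\bigr)\Bigr) + \frac{a+b}{s}\,d(s).
\]
Since $\tfrac{a}{s} + \tfrac{b}{s} = 1$ with $\tfrac{a}{s}, \tfrac{b}{s} \in \,]0,1[\,$, the rewritten hypothesis annihilates the first bracket, while $\tfrac{a+b}{s} = 1$; hence $d(a) + d(b) = d(s) = d(a+b)$. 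This yields $d(a+b) = d(a) + d(b)$ for all $a, b \in \mathbb{R}_{+}$, which together with \eqref{eq:2.2} holding by definition is exactly the derivation property restricted to positive arguments.

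Finally I would extend $d$ to all of $\mathbb{R}$ by setting $d(0) = 0$ and $d(-x) = -d(x)$ for $x > 0$, and check that both additivity and the Leibniz rule persist by a routine case analysis on the signs of the arguments, using the oddness of $d$ together with the two rules already established on the positive cone. The main obstacle is genuinely the additivity step; once the scaling identity above is in place, the remainder reduces to the standard verification that an odd extension of an additive, Leibniz map from $\mathbb{R}_{+}$ defines a real derivation on $\mathbb{R}$.
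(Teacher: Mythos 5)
Your proposal is correct and follows essentially the same route as the paper: the heart of both arguments is to evaluate the hypothesis at the normalized pair $\bigl(\tfrac{a}{a+b},\tfrac{b}{a+b}\bigr)$ to obtain additivity of $x\mapsto x\ell(x)$ on $\mathbb{R}_{+}$, derive the Leibniz rule on $\mathbb{R}_{+}$ from the logarithmic property, and pass to all of $\mathbb{R}$ via the odd extension. The only difference is presentational: the paper invokes the standard extension theorem for additive functions (Kuczma) and then remarks that oddness propagates the Leibniz rule, whereas you carry out the odd extension and its (genuinely routine) case analysis by hand.
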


\begin{proof}
Let $x,y\in\mathbb{R}_{+}.$ Then, by \eqref{eq:2.1} and by using the properties of the logarithmic function, we have that
\begin{multline*}
0=\frac{x}{x+y}\ell\left(\frac{x}{x+y}\right)+\frac{y}{x+y}\ell\left(\frac{y}{x+y}\right)\\
=\frac{x}{x+y}\left(\ell(x)-\ell(x+y)\right)+\frac{y}{x+y}\left(\ell(y)-\ell(x+y)\right)\\
=\frac{1}{x+y}\left(x\ell(x)+y\ell(y)-(x+y)\ell(x+y)\right).
\end{multline*}
This shows that the function $x\mapsto x\ell(x), x\in\mathbb{R}_{+}$ is additive on $\mathbb{R}_{+}$. Hence, by the well-known
extension theorem (see e.g. \cite{Kuc85}, Theorem 1 on page 471), there exists an additive function
$d:\mathbb{R}\to\mathbb{R}$ such that \eqref{eq:2.2} holds. Since $\ell$ is logarithmic, this implies that $d(xy)=xd(y)+yd(x)$
holds for all $x,y\in\mathbb{R}_{+}$. On the other hand, $d$ is odd thus this equation holds also for all $x,y\in\mathbb{R}$,
that is, $d$ is a real derivation.
\end{proof}

\begin{lem}\label{L2.3}
Suppose that $\ell:\mathbb{R}_{+}\to \mathbb{R}$ is a logarithmic function and the function
$g_{0}$ defined on the interval $]0,1[$ by
\begin{equation*}
g_{0}(x)=x\ell(x)+(1-x)\ell(1-x)
\end{equation*}
is bounded on a set of positive Lebesque measure. Then there exist a real number $\beta$
and a real derivation $d:\mathbb{R}\to\mathbb{R}$ such that
\begin{equation}\label{eq:2.3}
x\ell(x)+\beta x\ln(x)=d(x). \qquad (x\in \mathbb{R}_{+})
\end{equation}
\end{lem}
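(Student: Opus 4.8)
The plan is to strip off a suitable logarithmic term from $\ell$ and then to apply Lemma~\ref{L2.2}, using the boundedness hypothesis (through a regularity argument) to guarantee that exactly the right multiple of $\ln$ can be removed. Throughout set $L(x)=x\ell(x)$ for $x\in\mathbb{R}_{+}$ and $s(x)=x\ln(x)+(1-x)\ln(1-x)$ for $x\in\,]0,1[$, so that $g_{0}(x)=L(x)+L(1-x)$. Repeating verbatim the computation that opens the proof of Lemma~\ref{L2.2}, but without assuming the right-hand side vanishes, I first record that the logarithmic property of $\ell$ forces
\[
L(x)+L(y)-L(x+y)=(x+y)\,g_{0}\!\left(\frac{x}{x+y}\right)\qquad(x,y\in\mathbb{R}_{+}),
\]
i.e. the Cauchy difference of $L$ is symmetric and positively homogeneous of degree one.

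Next I would feed this identity into the associativity (cocycle) identity obtained by expanding $L(x+y+z)-L(x)-L(y)-L(z)$ in the two groupings $(x,y),(x+y,z)$ and $(y,z),(x,y+z)$. After normalising by $x+y+z=1$ this yields, for $x,y>0$ with $x+y<1$, the fundamental equation of information in the form
\[
(x+y)\,g_{0}\!\left(\frac{x}{x+y}\right)+g_{0}(x+y)=(1-x)\,g_{0}\!\left(\frac{y}{1-x}\right)+g_{0}(x),
\]
to which one should add the symmetry $g_{0}(x)=g_{0}(1-x)$ that is evident from the definition. The function $s$ arises in the same way from the choice $\ell=\ln$, so $s$ solves the very same equation; a direct check confirms this.

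The crux is now to pin $g_{0}$ down as a multiple of $s$, and this is precisely where the hypothesis is indispensable. A solution of the above equation that is bounded on a set of positive Lebesgue measure is forced to be regular, and the continuous (equivalently, measurable) solutions of the fundamental equation of information are exactly the constant multiples $c\,s$ of the Shannon information function; hence $g_{0}=c\,s$ for some $c\in\mathbb{R}$. Evaluating at the midpoint, where $g_{0}(1/2)=\ell(1/2)=-\ell(2)$ and $s(1/2)=-\ln(2)$, determines the constant as $c=\ell(2)/\ln(2)$. I expect the passage from ``bounded on a set of positive measure'' to ``continuous'' to be the main obstacle: it is a genuine regularity theorem for this functional equation (of the type supplied by Járai's theory) rather than a formal manipulation, and it is exactly the place where the assumption cannot be dropped.

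Finally I would return to $\ell$. Put $\beta=-c=-\ell(2)/\ln(2)$ and $h=\ell+\beta\ln$, which is again a logarithmic function. Then
\[
x\,h(x)+(1-x)\,h(1-x)=g_{0}(x)+\beta\,s(x)=c\,s(x)-c\,s(x)=0\qquad(x\in\,]0,1[),
\]
so $h$ satisfies hypothesis \eqref{eq:2.1} of Lemma~\ref{L2.2}. That lemma then produces a real derivation $d:\mathbb{R}\to\mathbb{R}$ with $x\,h(x)=d(x)$ for every $x\in\mathbb{R}_{+}$, which is exactly $x\ell(x)+\beta x\ln(x)=d(x)$, the desired conclusion \eqref{eq:2.3}.
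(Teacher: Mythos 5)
Your proof is correct and follows essentially the same route as the paper: both recognize that $g_{0}$ satisfies the (symmetric) fundamental equation of information, invoke the Diderrich-type theorem that a solution bounded on a set of positive Lebesgue measure must be a constant multiple of the Shannon information function, and then finish by applying Lemma~\ref{L2.2} to $\ell+\beta\ln$. The only cosmetic difference is that you derive the fundamental equation for $g_{0}$ directly via the cocycle identity and state the regularity theorem in unnormalized form, whereas the paper normalizes $g_{0}$ (treating the cases $\ell(2)\neq 0$ and $\ell(2)=0$ separately) and cites Acz\'el--Dar\'oczy to recognize it as a symmetric information function before applying Diderrich's theorem.
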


\begin{proof}
Define the function $g$ on the interval $[0,1]$ by $g(0)=g(1)=0$ and, for $x\in ]0,1[$, by
\begin{equation*}
g(x)=-\frac{g_{0}(x)}{\ell(2)}\,\,\text{if}\,\,\ell(2)\neq 0 \,\,\text{and}\,\,
g(x)=g_{0}(x)-x\log_{2}(x)-(1-x)\log_{2}(1-x) \,\,\text{if}\,\,\ell(2)=0.
\end{equation*}
Then $g$ is a symmetric information function (see \cite{AD75}, (3.5.33) Theorem on page 100) which,
by our assumption, is bounded on a set of positive Lebesque measure. Therefore, applying a theorem of
Diderrich \cite{Did86}, we obtain that
\begin{equation*}
g(x)=-x\log_{2}(x)-(1-x)\log_{2}(1-x). \qquad (x\in ]0,1[)
\end{equation*}
For a short proof of Diderrich's theorem see also \cite{Mak80} in which an idea of
J\'arai \cite{Jar79} proved to be very efficient. Taking into consideration the definition of $g$
and applying Lemma \ref{L2.2}, we get \eqref{eq:2.3} with suitable $\beta \in\mathbb{R}$.
\end{proof}

Now we are ready to prove our main result.

\begin{thm}\label{T2.4}
Let $\alpha\in\mathbb{R}, \,\, (I_{n})$ be an $\alpha$-recursive and $3$-semisymmetric relative information measure
on the open domain, and $f(x, y)=I_{2}(1-x, x|1-y, y),\,\, x,y\in ]0,1[$. Furthermore, suppose that
\begin{equation}\label{eq:2.4}
I_{2}(p_{1}, p_{2}|p_{1}, p_{2})=0.  \qquad ((p_{1},p_{2})\in \Gamma_{2})
\end{equation}
If $\alpha\notin \{0,1\}$ then $(I_{n})=(\gamma D_{n}^{\alpha})$ for some $\gamma \in\mathbb{R}$.
\\
If $\alpha=1$ and there exists a point $(u,v)\in ]0,1[^{2}$
such that the function $f(\cdot,v)$ is bounded on a set of positive Lebesgue measure and the function $f(u,\cdot)$
is bounded above or below on a set of positive Lebesgue measure then $(I_{n})=(\gamma D_{n}^{1})$ for some $\gamma \in\mathbb{R}$.
\\
And finally, if
$\alpha=0$ and there exists a point $(u,v)\in ]0,1[^{2}$ such that the function $f(\cdot,v)$ is bounded above or below
on a set of positive Lebesgue measure and the function $f(u,\cdot)$ is bounded on a set of positive Lebesgue measure then $(I_{n})=(\gamma D_{n}^{0})$
for some $\gamma \in\mathbb{R}$.
\end{thm}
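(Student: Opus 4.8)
The plan is to reduce everything to the single two-place function $f$ and a functional equation for it. By Lemma~\ref{L1.1}, an $\alpha$-recursive relative information measure is completely determined by $f(x,y)=I_{2}(1-x,x\mid 1-y,y)$; since $(D_{n}^{\alpha})$ is itself $\alpha$-recursive with associated function $f_{D}^{\alpha}(x,y)=D_{2}^{\alpha}(1-x,x\mid 1-y,y)$, it suffices to prove $f=\gamma f_{D}^{\alpha}$ for a suitable $\gamma\in\mathbb{R}$ and then invoke Lemma~\ref{L1.1} for both measures. A direct computation gives $f_{D}^{\alpha}(x,y)=\frac{1}{1-\alpha}\bigl(1-x^{\alpha}y^{1-\alpha}-(1-x)^{\alpha}(1-y)^{1-\alpha}\bigr)$ when $\alpha\neq 1$, and $f_{D}^{1}(x,y)=x\ln x+(1-x)\ln(1-x)-x\ln y-(1-x)\ln(1-y)$.

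First I would translate the two hypotheses into conditions on $f$. Writing $I_{3}$ by the $n=3$ instance of Lemma~\ref{L1.1} and swapping the second and third coordinates, $3$-semisymmetry becomes the functional equation
\[
(1-v)^{\alpha}(1-t)^{1-\alpha} f\!\Bigl(\tfrac{u}{1-v},\tfrac{s}{1-t}\Bigr)+f(v,t)
=(1-u)^{\alpha}(1-s)^{1-\alpha} f\!\Bigl(\tfrac{v}{1-u},\tfrac{t}{1-s}\Bigr)+f(u,s),
\]
for all $u,v,s,t>0$ with $u+v<1$ and $s+t<1$, while \eqref{eq:2.4} gives the normalization $f(x,x)=0$ for $x\in\,]0,1[$. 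The essential structural observation is that the two arguments of $f$ depend on the $p$-variables $(u,v)$ and on the $q$-variables $(s,t)$ \emph{separately}, so these may be varied independently.

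Next I would solve this equation. Choosing the free variables so that the arguments of one of the two $f$-terms coincide (forcing it to vanish by $f(x,x)=0$), and exploiting the independence of the $p$- and $q$-sides, I expect to decouple the dependence on the two arguments of $f$ and to extract an auxiliary one-variable function that is \emph{logarithmic}, i.e. satisfies $\ell(ab)=\ell(a)+\ell(b)$. Re-inserting this into the reconstructed form of $f$ and imposing $f(x,x)=0$ should reproduce exactly the relation $x\ell(x)+(1-x)\ell(1-x)=0$ of Lemma~\ref{L2.2} (respectively, under the boundedness hypotheses, the weaker assumption of Lemma~\ref{L2.3}). Lemma~\ref{L2.2} then yields $x\ell(x)=d(x)$ for a real derivation $d$, and the remaining work is to show that $d$ must vanish.

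This last step is the main obstacle, and it is exactly where the trichotomy enters. If $\alpha\notin\{0,1\}$, both exponents $\alpha$ and $1-\alpha$ are nonzero; substituting the derivation-form solution back into the functional equation and comparing it against the genuine power weights $x^{\alpha}$, $y^{1-\alpha}$ should force $d\equiv 0$ with no regularity at all, and this nondegeneracy is precisely why the cases $\alpha\in\{0,1\}$ behave differently. If $\alpha=1$ the exponent $1-\alpha$ on the $q$-argument degenerates, so a pathological logarithmic function in the second argument of $f$ is no longer obstructed by the weights; here the hypothesis that $f(u,\cdot)$ be bounded above or below on a set of positive measure invokes the rigidity of logarithmic functions (one-sided bounded $\Rightarrow c\ln$), while the two-sided boundedness of $f(\cdot,v)$ feeds the symmetric information function into Lemma~\ref{L2.3} (via Diderrich's theorem) to control the first argument. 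For $\alpha=0$ the exponent on the $p$-argument degenerates instead, so the roles of the two arguments of $f$ are interchanged, which is exactly why the boundedness hypotheses on $f(\cdot,v)$ and $f(u,\cdot)$ are swapped in the statement. In each case one concludes $d\equiv 0$ with the correct logarithmic coefficient, so that $f=\gamma f_{D}^{\alpha}$, and Lemma~\ref{L1.1} upgrades this to $(I_{n})=(\gamma D_{n}^{\alpha})$.
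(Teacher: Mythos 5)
Your reduction is set up correctly: Lemma \ref{L1.1} collapses everything to the function $f$, the $3$-semisymmetry written through the $n=3$ case of Lemma \ref{L1.1} gives exactly the functional equation you display, and \eqref{eq:2.4} gives the normalization $f(x,x)=0$. You have also correctly diagnosed why the regularity hypotheses swap between the cases $\alpha=1$ and $\alpha=0$. But the mathematical core of the proof is missing: you never actually solve the functional equation, and everything after ``Next I would solve this equation'' is phrased in the conditional (``I expect to decouple\dots'', ``should reproduce\dots'', ``should force $d\equiv 0$''). Determining the \emph{general} solution of that equation with no regularity whatsoever is precisely the hard part, and it does not fall out of a few substitutions. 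The paper does not solve it from scratch either: it invokes Theorem 4.2.3 (p.~87) and Lemma 1.2.12 (p.~16) of Ebanks--Sahoo--Sander \cite{ESS98}, which deliver closed forms for $I_{n}$, namely $I_{n}=b\,p_{1}^{\alpha}q_{1}^{1-\alpha}+c\sum_{i=2}^{n}p_{i}^{\alpha}q_{i}^{1-\alpha}-b$ when $\alpha\notin\{0,1\}$, and $I_{n}=\sum_{i=1}^{n}p_{i}\bigl(\ell_{1}(p_{i})+\ell_{2}(q_{i})\bigr)+c(1-p_{1})$ with \emph{two} logarithmic functions $\ell_{1},\ell_{2}$ when $\alpha=1$ (symmetrically for $\alpha=0$). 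Without this structure theorem, or a complete proof of it, the later steps of your outline have nothing concrete to operate on.

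Two specific points where your sketch diverges from what actually happens. First, in the case $\alpha\notin\{0,1\}$ no logarithmic functions and no derivations occur at all: the cited theorem says the solution is already the two-parameter family above, so \eqref{eq:2.4} merely forces $b=c$ and the conclusion follows with a suitable $\gamma$; there is no ``derivation-form solution'' to be killed, so your plan of substituting one back into the equation and comparing against the power weights addresses a difficulty that never arises (the absence of pathological solutions for $\alpha\notin\{0,1\}$ is the content of the structure theorem itself, not a consequence you can extract afterwards). Second, for $\alpha=1$ your sketch speaks of a single auxiliary logarithmic function, whereas the actual solution carries two, one attached to each argument of $f$, and the proof requires a precise bookkeeping: \eqref{eq:2.4} together with Lemma \ref{L2.2} gives $x\ell_{2}(x)=-x\ell_{1}(x)+d_{1}(x)$ for a derivation $d_{1}$; then the two-sided boundedness of $f(\cdot,v)$ feeds into Lemma \ref{L2.3} to give $x\ell_{1}(x)+\beta x\ln x=d_{2}(x)$; and finally the one-sided boundedness of $f(u,\cdot)$ kills the derivation $d_{2}-d_{1}$. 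Your intuition about which boundedness hypothesis controls which argument is right, but the interplay of the two logarithmic functions and the two derivations is an essential part of the argument, not a detail to be filled in.
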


\begin{proof}
Applying Theorem 4.2.3. on page 87 of \cite{ESS98} with $M(x,y)=x^{\alpha}y^{1-\alpha},\,\,x,y\in\mathbb{R}_{+}$ and taking into
consideration Lemma 1.2.12. on page 16 of \cite{ESS98}, (see also \cite{A81}), we have that
\begin{equation}\label{eq:2.5}
I_{n}\left(p_{1}, \ldots, p_{n}\vert q_{1}, \ldots, q_{n}\right)=bp_{1}^{\alpha}q_{1}^{1-\alpha}+c\sum_{i=2}^{n}p_{i}^{\alpha}q_{i}^{1-\alpha}-b
\end{equation}
in case $\alpha\notin \{0,1\}$,
\begin{equation}\label{eq:2.6}
I_{n}\left(p_{1}, \ldots, p_{n}\vert q_{1}, \ldots, q_{n}\right)=\sum_{i=1}^{n}p_{i}(\ell_{1}(p_{i})+\ell_{2}(q_{i}))+c(1-p_{1})
\end{equation}
in case $\alpha=1$, and
\begin{equation}\label{eq:2.7}
I_{n}\left(p_{1}, \ldots, p_{n}\vert q_{1}, \ldots, q_{n}\right)=\sum_{i=1}^{n}q_{i}(\ell_{1}(p_{i})+\ell_{2}(q_{i}))+c(1-q_{1})
\end{equation}
in case $\alpha=0$ for all $n\geq 2, (p_{1}, \ldots, p_{n}), (q_{1}, \ldots, q_{n})\in\Gamma_{n}^{\circ}$ with some $b,c\in\mathbb{R}$ and
logarithmic functions $\ell_{1}, \ell_{2}:\mathbb{R}_{+}\to\mathbb{R}$.

Now we utilize our further conditions on $(I_{n})$. In case $\alpha\notin \{0,1\}$, \eqref{eq:2.5} with $n=2$ and \eqref{eq:2.4} imply that
$0=bp_{1}+cp_{2}-b$ for all $(p_{1}, p_{2})\in \Gamma_{2}$ whence $b=c$ follows. Thus, by \eqref{eq:2.5}, we obtain that
$(I_{n})=(\gamma D_{n}^{\alpha})$ with $\gamma=(\alpha-1)^{-1}$. In case $\alpha=1$, \eqref{eq:2.6} with $n=2$ and \eqref{eq:2.4} imply that
\begin{equation*}
0=p_{1}\ell(p_{1})+p_{2}\ell(p_{2})+c(1-p_{1}). \qquad ((p_{1}, p_{2})\in \Gamma_{2})
\end{equation*}
where $\ell=\ell_{1}+\ell_{2}$. Therefore $c=0$, and, by Lemma \ref{L2.2}, we get that $x\ell_{2}(x)=-x\ell_{1}(x)+d_{1}(x)$ for all
$x\in\mathbb{R}_{+}$ and for some real derivation $d_{1}:\mathbb{R}\to\mathbb{R}$. Thus
\begin{equation*}
f(x,y)=x\ell_{1}\left(\frac{x}{y}\right)
+(1-x)\ell_{1}\left(\frac{1-x}{1-y}\right)
+\left(\frac{x}{y}-\frac{1-x}{1-y}\right)d_{1}(y).
\qquad (x,y\in ]0,1[)
\end{equation*}
Since the function $f(\cdot,v)$ is bounded on a set of positive Lebesque measure, we get that the function
$x\mapsto x\ell_{1}(x)+(1-x)\ell_{1}(1-x), \, x\in]0,1[$ has the same property. Thus, by Lemma \ref{L2.3},
\[
x\ell_{1}(x)+\beta x\ln(x)=d_{2}(x), \qquad (x\in \mathbb{R}_{+})
\]
for some $\beta \in\mathbb{R}$ and derivation $d_{2}:\mathbb{R}\to\mathbb{R}$.
Hence
\begin{equation*}
f(x,y)=-\beta x\ln\left(\frac{x}{y}\right)
-\beta(1-x)\ln\left(\frac{1-x}{1-y}\right)
-\left(\frac{x}{y}-\frac{1-x}{1-y}\right)(d_{2}(y)-d_{1}(y)).
\qquad (x,y\in ]0,1[)
\end{equation*}
However, $f(u,\cdot)$ is bounded above or below on a set of positive Lebesgue measure for some $u\in ]0,1[$ thus the
derivation $d_{2}-d_{1}$ has the same property, so $d_{2}-d_{1}=0$. Therefore
\begin{equation*}
f(x,y)=-\beta x\ln\left(\frac{x}{y}\right)
-\beta(1-x)\ln\left(\frac{1-x}{1-y}\right)
\qquad (x,y\in ]0,1[)
\end{equation*}
and the statement follows from Lemma \ref{L1.1} with a suitable $\gamma \in\mathbb{R}$.
The case $\alpha=0$ can be handled similarly by interchanging the role of the distributions $(p_{1}, \ldots, p_{n})$ and
$(q_{1}, \ldots, q_{n})$ and of the logarithmic functions $\ell_{1}$ and $\ell_{2},$ respectively.
\end{proof}

\section{Connections to known characterizations}

In this section we will point out some connections between our
characterization theorem and other statements. Here we deal especially with the results
of Hobson \cite{Hob69} and Furuichi \cite{Fur05}. They considered the relative information measure on the closed domain.
In this case, however the expressions $\frac{0}{0+0}, \frac{0}{0+\ldots +0}, 0^{\alpha}, 0^{1-\alpha}$ can appear. Therefore,
in the remaining part of the paper the conventions $\frac{0}{0+0}=\frac{0}{0+\ldots +0}=0^{\alpha}=0^{1-\alpha}=0$ are always adapted
(see also \cite{AK82}).

We begin with several definitions.

\begin{defn}
The relative information measure $(I_{n})$ on the closed domain
is said to be \emph{expansible}, if
\[
I_{n+1}\left(p_{1}, \ldots, p_{n}, 0\vert q_{1}, \ldots, q_{n}, 0\right)=
I_{n}\left(p_{1}, \ldots, p_{n} \vert q_{1}, \ldots, q_{n} \right)
\]
is satisfied for all $n\geq 2$ and
$(p_{1}, \ldots, p_{n}), (q_{1}, \ldots, q_{n})\in\Gamma_{n}$.
\\
The relative information measure is called \emph{decisive}, if
\[
I_{2}(1, 0\vert 1, 0)=0
\]
holds.
\\
Let $\alpha\in\mathbb{R}$ be arbitrarily fixed,
we say that the relative information measure $(I_{n})$
satisfies the \emph{generalized additivity}, if
for all $n, m\geq 2$ and for arbitrary
$(p_{1, 1}, \ldots, p_{1, m}, \ldots, \ldots, p_{n, 1}, \ldots, p_{n, m}), \allowbreak
(q_{1, 1}, \ldots, q_{1, m}, \ldots, \ldots, q_{n, 1}, \ldots, q_{n, m}) \in\Gamma_{nm}$
(or $\Gamma_{nm}^{\circ}$)
\begin{multline*}
I_{nm}\left(p_{1, 1}, \ldots, p_{1, m}, \ldots, \ldots, p_{n, 1}, \ldots, p_{n, m}\vert
q_{1, 1}, \ldots, q_{1, m}, \ldots, \ldots, q_{n, 1}, \ldots, q_{n, m}\right)
\\=
I_{n}(P_{1}, \ldots, P_{n}\vert Q_{1}, \ldots Q_{n})+
\sum_{i=1}^{n}P_{i}^{\alpha}Q_{i}^{1-\alpha}
I_{m}\left(\frac{p_{i, 1}}{P_{i}}, \ldots, \frac{p_{i, m}}{P_{i}}
\vert
\frac{q_{i, 1}}{Q_{i}}, \ldots, \frac{q_{i, m}}{Q_{i}}
\right)
\end{multline*}
is fulfilled, where $P_{i}=\sum_{j=1}^{m}p_{i, j}$ and
$Q_{i}=\sum_{j=1}^{m}q_{i, j}$, $i=1, \ldots, n$.
\end{defn}

A lengthy but simple calculation shows that the relative information measure $(D_{n}^{\alpha})$
fulfills all of the above listed criteria. As well as Hobson \cite{Hob69} and Furuichi \cite{Fur05},
we would like to investigate the converse direction.
More precisely, the question is whether the generalized additivity property determines
$(D_{n}^{\alpha})$ up to a multiplicative constant. In general this is not true.
Since let us observe that in case we consider the generalized additivity on the
open domain $\Gamma_{n}^{\circ}$ then this property is insignificant for
$I_{n}$ if $n$ is a prime.
Nevertheless, on the closed domain this property is well--treatable.
In this case we can prove the following.

\begin{lem}\label{L2.4}
If the relative information measure $(I_{n})$ on the closed domain
is expansible and satisfies the general additivity property with a certain
$\alpha\in\mathbb{R}$, then it is also decisive and $\alpha$--recursive.
\end{lem}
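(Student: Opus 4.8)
The statement asserts two conclusions, and the plan is to establish them in turn: first decisiveness, and then, using it, $\alpha$--recursivity.

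For \emph{decisiveness}, I would apply the generalized additivity with $n=m=2$ to the degenerate pair $(1,0,0,0)$, $(1,0,0,0)\in\Gamma_4$. The two blocks are $(1,0)$ and $(0,0)$; since $P_2=Q_2=0$, the adopted conventions $0^{\alpha}=0^{1-\alpha}=0$ make the second summand vanish, while the first block is already normalized, so the right-hand side collapses to $I_2(1,0\vert 1,0)+I_2(1,0\vert 1,0)$. On the other hand, applying expansibility twice gives $I_4(1,0,0,0\vert 1,0,0,0)=I_2(1,0\vert 1,0)$. Comparing the two expressions yields $I_2(1,0\vert 1,0)=2I_2(1,0\vert 1,0)$, hence $I_2(1,0\vert 1,0)=0$, which is decisiveness.

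The hard part is $\alpha$--recursivity, because the naive attempt — isolating the first two coordinates by generalized additivity with $m=2$ blocks — forces the remaining masses $p_3,\dots,p_n$ to be padded \emph{individually} to blocks $(p_j,0)$, so the auxiliary zeros land in interior positions of the long tuple. Since no symmetry is assumed, expansibility, which only removes \emph{trailing} zeros, cannot eliminate them. To bypass this, I would first prove a \emph{peeling} identity that separates the \emph{last} coordinate, where all auxiliary zeros do land at the end. Concretely, apply generalized additivity with outer length $2$ and inner length $n-1$ to $(p_1,\dots,p_n,0,\dots,0)$ (with $n-2$ trailing zeros), split into the blocks $(p_1,\dots,p_{n-1})$ and $(p_n,0,\dots,0)$. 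The second block normalizes to $(1,0,\dots,0)$, whose $I_{n-1}$--value is $I_2(1,0\vert1,0)=0$ by expansibility and decisiveness, while the left-hand side equals $I_n(p_1,\dots,p_n)$ by expansibility. Writing $P_1=p_1+\dots+p_{n-1}$, $Q_1=q_1+\dots+q_{n-1}$, this gives, for every $n\ge 3$,
\[
I_n(p_1,\dots,p_n\vert q_1,\dots,q_n)=I_2(P_1,p_n\vert Q_1,q_n)+P_1^{\alpha}Q_1^{1-\alpha}\,I_{n-1}\!\left(\tfrac{p_1}{P_1},\dots,\tfrac{p_{n-1}}{P_1}\Big\vert \tfrac{q_1}{Q_1},\dots,\tfrac{q_{n-1}}{Q_1}\right).
\]

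Finally, I would deduce the required $\alpha$--recursivity from this peeling identity by induction on $n$. For $n=3$ the peeling identity is literally the $\alpha$--recursivity relation, since separating the last coordinate coincides with merging the first two. For the inductive step I would substitute the induction hypothesis (the $\alpha$--recursivity relation for $n-1$) into the $I_{n-1}$ term on the right of the peeling identity; the prefactor $P_1^{\alpha}Q_1^{1-\alpha}$ combines with $\big(\tfrac{p_1+p_2}{P_1}\big)^{\alpha}\big(\tfrac{q_1+q_2}{Q_1}\big)^{1-\alpha}$ to produce exactly $(p_1+p_2)^{\alpha}(q_1+q_2)^{1-\alpha}$, yielding the desired term $I_2\big(\tfrac{p_1}{p_1+p_2},\tfrac{p_2}{p_1+p_2}\big\vert\cdots\big)$, while the remaining part is recognized, via the peeling identity for $n-1$, as $I_{n-1}(p_1+p_2,p_3,\dots,p_n\vert q_1+q_2,q_3,\dots,q_n)$. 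This is precisely the $\alpha$--recursivity relation for $n$. Throughout, the degenerate cases ($P_1=0$, $p_n=0$, and so on) are covered by the conventions $\tfrac{0}{0}=0^{\alpha}=0^{1-\alpha}=0$, so the argument stays valid on the whole closed domain.
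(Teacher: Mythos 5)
Your proof is correct, but for the $\alpha$--recursivity part it takes a genuinely different --- and in fact more careful --- route than the paper. The decisiveness argument is identical: generalized additivity with $n=m=2$ at $(1,0,0,0)$ plus expansibility. For recursivity, the paper applies generalized additivity once, with blocks $(r_{1},r_{2},0,\ldots,0)$, $(r_{i+1},0,\ldots,0)$ for $i=2,\ldots,n-1$, and $(0,\ldots,0)$, kills the singleton terms $r_{j}^{\alpha}s_{j}^{1-\alpha}I_{m}(1,0,\ldots,0\vert 1,0,\ldots,0)$ by expansibility and decisiveness, and reduces the outer term to $I_{n-1}(r_{1}+r_{2},r_{3},\ldots,r_{n}\vert\cdots)$; but this leaves on the left-hand side the quantity $I_{nm}(r_{1},r_{2},0,\ldots,0,r_{3},0,\ldots,0,\ldots)$ whose padding zeros sit in \emph{interior} positions, and since expansibility deletes only trailing zeros and the lemma assumes no symmetry, identifying this left-hand side with $I_{n}(r_{1},\ldots,r_{n}\vert s_{1},\ldots,s_{n})$ is not justified by the stated axioms --- exactly the obstruction you point out. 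Your peeling identity (outer length $2$, inner length $n-1$, splitting off the last coordinate) keeps every auxiliary zero at the tail, so expansibility legitimately collapses both sides; the induction --- peel off $p_{n}$, apply the $(n-1)$-case to the normalized prefix, reassemble via the peeling identity for $n-1$, with the conventions covering the degenerate cases --- then recovers full $\alpha$--recursivity. In short: the paper's argument is a one-shot derivation for all $n$, but it is complete only if interior zeros can be permuted away (e.g.\ under a symmetry hypothesis it does not make); your version costs an induction and one extra application of generalized additivity per step, but it uses nothing beyond the stated axioms and so actually closes a gap in the paper's own proof.
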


\begin{proof}
Firstly, we will show, that the generalized additivity and the expansibility
implies that the relative information measure $(I_{n})$
is decisive.
Indeed, if we use the generalized additivity with the choice $n=m=2$ and
$(p_{1}, p_{2}, p_{3}, p_{4})=(q_{1}, q_{2}, q_{3}, q_{4})=(1, 0, 0, 0)$, then we
get that
\[
I_{4}(1, 0, 0, 0 \vert 1, 0, 0, 0)=
I_{2}(1, 0\vert 1, 0)+I_{2}(1, 0)
\]
holds. On the other hand, $(I_{n})$ is expansible, therefore
$I_{4}(1, 0, 0, 0\vert 1, 0, 0, 0)=I_{2}(1, 0\vert 1, 0)$.
Thus $I_{2}(1, 0\vert 1, 0)=0$ follows, so $(I_{n})$
is decisive.

Now we will prove the $\alpha$--recursivity of $(I_{n})$.
Let $(r_{1}, \ldots, r_{n}), (s_{1}, \ldots, s_{n})\in\Gamma_{n}$ and
use the generalized additivity with the following substitution
\[
p_{1, 1}=r_{1},\quad  p_{1, 2}=r_{2},\quad  p_{i, 1}=r_{i+1}, i=2, \ldots, n-1, \quad
p_{i, j}=0 \quad \text{otherwise}
\]
and
\[
q_{1, 1}=s_{1},\quad  q_{1, 2}=s_{2},\quad  q_{i, 1}=s_{i+1}, i=2, \ldots, n-1,\quad
q_{i, j}=0 \quad \text{otherwise}
\]
to derive
\begin{multline*}
I_{nm}(r_{1}, r_{2}, 0, \ldots, 0, r_{3}, 0, \ldots, 0, r_{n}, 0, \ldots, 0
\vert s_{1}, s_{2}, 0, \ldots, 0, s_{3}, 0, \ldots, 0, s_{n}, 0, \ldots, 0)
\\
=
I_{n}(r_{1}+r_{2}, r_{3}, \ldots, r_{n}, 0 \vert s_{1}+s_{2}, s_{3}, \ldots, s_{n}, 0)
\\
+
(r_{1}+r_{2})^{\alpha}(s_{1}+s_{2})^{1-\alpha}I_{2}\left(\frac{r_{1}}{r_{1}+r_{2}}, \frac{r_{2}}{r_{1}+r_{2}}\bigg \vert
\frac{s_{1}}{s_{1}+s_{2}}, \frac{s_{2}}{s_{1}+s_{2}}\right)
\\
+\sum_{j=3}^{n}r_{j}^{\alpha}q_{j}^{1-\alpha}I_{m}(1, 0, \ldots, 0\vert 1, 0, \ldots, 0).
\end{multline*}
After using that $(I_{n})$ is expansible and decisive, we
obtain the $\alpha$--recursivity.
\end{proof}

In view of Theorem \ref{T2.4}. and Lemma \ref{L2.4}. the following characterization
theorem follows easily.

\begin{thm}\label{T2.5}
Let $\alpha\in\mathbb{R}, \,\, (I_{n})$ be an expansible and $3$-semisymmetric relative information measure
which also satisfies the generalized additivity property on $\Gamma_{n}$ with the parameter $\alpha$ and
let $f(x,y)=I_{2}(1-x,x|1-y,y),\,\, x,y\in ]0,1[$. Additionally, suppose that
\begin{equation}
I_{2}(p_{1}, p_{2}|p_{1}, p_{2})=0.  \qquad ((p_{1},p_{2})\in \Gamma_{2})
\end{equation}
If $\alpha\notin \{0,1\}$ then $(I_{n})=(\gamma D_{n}^{\alpha})$ for some $\gamma \in\mathbb{R}$.
\\
If $\alpha=1$ and there exists a point $(u,v)\in ]0,1[^{2}$
such that the function $f(\cdot,v)$ is bounded on a set of positive Lebesgue measure and the function $f(u,\cdot)$
is bounded above or below on a set of positive Lebesgue measure then $(I_{n})=(\gamma D_{n}^{1})$ for some $\gamma \in\mathbb{R}$.
\\
And finally, if
$\alpha=0$ and there exists a point $(u,v)\in ]0,1[^{2}$ such that the function $f(\cdot,v)$ is bounded above or below
on a set of positive Lebesgue measure and the function $f(u,\cdot)$ is bounded on a set of positive Lebesgue measure then $(I_{n})=(\gamma D_{n}^{0})$
for some $\gamma \in\mathbb{R}$.
\end{thm}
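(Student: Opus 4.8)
The plan is to deduce the theorem from the two results already in hand, Lemma \ref{L2.4} and Theorem \ref{T2.4}, by transporting the hypotheses from the closed to the open domain, applying the open--domain characterization, and then promoting the conclusion back to the boundary. First I would invoke Lemma \ref{L2.4}: since $(I_n)$ is expansible and satisfies the generalized additivity with parameter $\alpha$ on $\Gamma_n$, it is \emph{decisive} and \emph{$\alpha$--recursive} on the closed domain. Because $\Gamma_n^{\circ}\subseteq\Gamma_n$, the $\alpha$--recursivity identity holds in particular for all interior points, so the restriction of $(I_n)$ to the open domain is an $\alpha$--recursive relative information measure there.

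Next I would verify that the remaining hypotheses of Theorem \ref{T2.4} survive the restriction. The $3$--semisymmetry assumed on $\Gamma_3$ restricts to $\Gamma_3^{\circ}$; the normalization $I_2(p_1,p_2|p_1,p_2)=0$ is imposed on all of $\Gamma_2$, hence on $\Gamma_2^{\circ}$; and the boundedness conditions on $f(\cdot,v)$ and $f(u,\cdot)$ are phrased identically in the two theorems, for the same function $f(x,y)=I_2(1-x,x|1-y,y)$ on $]0,1[^{2}$. Thus, in each of the three cases $\alpha\notin\{0,1\}$, $\alpha=1$, $\alpha=0$, Theorem \ref{T2.4} applies and yields a constant $\gamma\in\mathbb{R}$ with $I_n=\gamma D_n^{\alpha}$ on $\Gamma_n^{\circ}$ for every $n\ge 2$.

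It remains to extend this identity from the open to the full closed domain, and this is the step I expect to require the real work. I would set $E_n=I_n-\gamma D_n^{\alpha}$ on $\Gamma_n$. Since $\alpha$--recursivity, expansibility and decisiveness are all \emph{linear} conditions on the family (the weights $(p_1+p_2)^{\alpha}(q_1+q_2)^{1-\alpha}$ do not involve $I_n$), and since $(D_n^{\alpha})$ itself satisfies these three properties on the closed domain, the difference $(E_n)$ inherits them and vanishes on $\Gamma_n^{\circ}$. I would then prove $E_n\equiv 0$ on $\Gamma_n$ by induction on $n$: pairing the first two coordinates via $\alpha$--recursivity expresses $E_n$ at a boundary point through $E_{n-1}$ (zero by the induction hypothesis, now on the whole of $\Gamma_{n-1}$) together with a weighted value of $E_2$ at the reduced pair, the weight vanishing by the conventions $0^{\alpha}=0^{1-\alpha}=0$ whenever one of the merged masses is $0$. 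Hence everything reduces to showing that $E_2$ vanishes on the boundary of $\Gamma_2$.

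The base case is the genuine obstacle. The interior of $\Gamma_2$ is already settled and the corner $(1,0|1,0)$ is handled by decisiveness; what must be pinned down are boundary values such as $E_2(a,1-a|0,1)$. Here I would exploit the extra rigidity of $3$--semisymmetry: evaluating $E_3(p_1,p_2,p_3|0,q_2,q_3)$ by two different pairings---once grouping the first two coordinates and once, after the semisymmetric transposition of the last two, grouping coordinates $1$ and $3$---yields, through $\alpha$--recursivity and the vanishing of the interior term $E_2(p_1+p_2,p_3|q_2,q_3)$, an identity of the form
\[
(p_1+p_2)^{\alpha}q_2^{1-\alpha}\,E_2\!\left(\tfrac{p_1}{p_1+p_2},\tfrac{p_2}{p_1+p_2}\Big|0,1\right)=(p_1+p_3)^{\alpha}q_3^{1-\alpha}\,E_2\!\left(\tfrac{p_1}{p_1+p_3},\tfrac{p_3}{p_1+p_3}\Big|0,1\right),
\]
in which $q_2\in\,]0,1[$ may be varied independently of the $p_i$. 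For $\alpha\neq 1$ the incompatible powers $q_2^{1-\alpha}$ and $(1-q_2)^{1-\alpha}$ force these boundary values to be $0$; the symmetric manipulation in the first distribution disposes of $E_2(0,1|b,1-b)$ for $\alpha\neq 0$, and the remaining corner and mixed boundary configurations follow the same pattern, the exceptional exponents $\alpha\in\{0,1\}$ being treated analogously with the trivial powers replaced by the logarithmic identities underlying Lemmas \ref{L2.2} and \ref{L2.3}. Carefully discharging this finite list of boundary cases, while keeping track of the zero conventions, is where the care lies; once $E_2\equiv 0$ on $\Gamma_2$, the induction closes and $(I_n)=(\gamma D_n^{\alpha})$ on the closed domain.
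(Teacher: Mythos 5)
Your first two paragraphs are exactly the paper's proof, and they already constitute a complete proof of the theorem: the authors derive Theorem \ref{T2.5} from Lemma \ref{L2.4} and Theorem \ref{T2.4} precisely as you do (generalized additivity and expansibility give decisiveness and $\alpha$--recursivity on the closed domain, these restrict together with $3$-semisymmetry and the normalization to the open domain, and Theorem \ref{T2.4} finishes). The reason nothing further is needed is that in this paper $D_{n}^{\alpha}$ is \emph{defined only on} $\Gamma_{n}^{\circ}\times\Gamma_{n}^{\circ}$, so the conclusion $(I_{n})=(\gamma D_{n}^{\alpha})$ is an identity of functions on the open domain; Theorem \ref{T2.5} makes no assertion about the boundary values of $I_{n}$.

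Your third step therefore attacks a statement that is not part of the theorem, and -- more importantly -- the attack as sketched cannot succeed in the exceptional cases. For $\alpha=1$ the function $D_{n}^{1}$ has no real-valued extension to the closed domain at all: a term $p_{i}\ln(p_{i}/q_{i})$ with $q_{i}=0<p_{i}$ is infinite, so your difference $E_{n}=I_{n}-\gamma D_{n}^{1}$ is undefined at precisely the boundary points you need to handle, and the plan of treating the ``exceptional exponents analogously'' has nothing to operate on. Worse, your own two-pairing computation, run for $\alpha=1$ directly in terms of $I$ (using that $I_{2}=\gamma D_{2}^{1}$ at interior points, and writing $g(x)=I_{2}(1-x,x\vert 0,1)$), gives
\[
\gamma\Bigl[(p_{1}+p_{2})\ln\tfrac{p_{1}+p_{2}}{q_{2}}+p_{3}\ln\tfrac{p_{3}}{q_{3}}\Bigr]+(p_{1}+p_{2})\,g\Bigl(\tfrac{p_{2}}{p_{1}+p_{2}}\Bigr)
=\gamma\Bigl[(p_{1}+p_{3})\ln\tfrac{p_{1}+p_{3}}{q_{3}}+p_{2}\ln\tfrac{p_{2}}{q_{2}}\Bigr]+(p_{1}+p_{3})\,g\Bigl(\tfrac{p_{3}}{p_{1}+p_{3}}\Bigr),
\]
whose $q$-dependent part reduces to $\gamma p_{1}\ln(q_{3}/q_{2})$; since this must be constant as $q_{2}$ varies over $]0,1[$, it forces $\gamma=0$. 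In other words, for $\alpha=1$ the closed-domain axioms admit no nontrivial measure whose boundary behaviour could match an ``extended'' $D^{1}$, so a proof whose goal is to verify such matching is doomed from the start. (For $\alpha\notin\{0,1\}$ your extension argument does look workable -- the conventional extension of $D_{n}^{\alpha}$ is recursive and the incompatible powers $q_{2}^{1-\alpha}$, $q_{3}^{1-\alpha}$ kill the boundary values of $E_{2}$ -- but it proves more than the theorem claims.) The fix is simple: keep your first two paragraphs and delete the rest.
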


Finally, we remark that the essence of Theorems \ref{T2.4}.~and \ref{T2.5}.~is that,
in case $\alpha\notin\left\{0,1\right\}$, the algebraic properties listed in
Theorems \ref{T2.4}.~and \ref{T2.5}., respectively, determine the information measure
$(D_{n}^{\alpha})$ up to a multiplicative constant \emph{without any regularity assumption}.
Moreover, if $\alpha\in\left\{0,1\right\}$, then the mentioned algebraic properties
with a really mild regularity condition determine $(D_{n}^{\alpha})$ up to a multiplicative constant.


\begin{thebibliography}{99}

\bibitem{A81}
J.~Acz\'{e}l,
\emph{26. Remark. Solution of Problem 17 (1)},
Proceedings of the $18^{th}$ International Symposium on Functional
Equations, University of Waterloo, Centre for Information Theory,
Faculty of Mathematics, Waterloo, Ontario, Canada, N2L 3G1,
1981,
14--15.

\bibitem{AD75}
J.~Acz\'{e}l, Z.~Dar\'{o}czy,
\emph{On measures of information and their characterizations},
Mathematics in Science and Engineering, \textbf{vol.~115.},
Academic Press, New York--London,
1975.

\bibitem{AK82}
J.~Acz\'{e}l, Pl.~Kannappan,
\emph{General two-place information functions},
Resultate der Mathematik,
\textbf{5}
(1982),
99--106.

\bibitem{Dar70}
Z.~Dar\'{o}czy,
\emph{Generalized information functions},
Information and Control,
\textbf{16}
(1970),
36--51.

\bibitem{Did86}
G.T.~Diderrich,
\emph{Boundedness on a set of positive measure and th fundamental equation of information},
Publ. Math. Debrecen,
\textbf{33}
(1986),
1--7.

\bibitem{ESS98}
B.~Ebanks, P.~Sahoo, W.~Sander,
\emph{Characterizations of information measures},
World Scientific Publishing Co., Inc., River Edge, NJ, 1998.

\bibitem{Fur05}
S.~Furuichi,
\emph{On uniqueness theorems for Tsallis entropy and Tsallis relative entropy},
IEEE Trans. Inform. Theory
\textbf{51}
(2005),  No.~10,
3638--3645.

\bibitem{FYK04}
S.~Furuichi, K.~Yanagi, K.~Kuriyama,
\emph{Fundamental propertis of Tsallis relative entropy},
J. Math. Psys,
\textbf{45}
(2004),  No.~12,
4868--4877.

\bibitem{Hob69}
A.~Hobson,
\emph{A new theorem of information theory},
J.~Statist.~Phys.
\textbf{1}
(1969),
383--391.

\bibitem{Jar79}
A.~J\'arai,
\emph{Remark P1179S1},
Aequationes Math.,
\textbf{19}
(1979),
286--288.

\bibitem{Kuc85}
M.~Kuczma,
\emph{An Introduction to the Theory of Functional Equations and Inequalities},
Prace Naukowe Uniwersytetu \'Sl\c{a}skiego w Katowicach, \textbf{vol.~489.},
Pa\'nstwowe Wydawnictwo Naukowe --- Uniwersytet {\'S}l\c{a}ski,
Warszawa--Krak\'ow--Katowice,
1985.

\bibitem{Kul59}
J.~Kullback,
\emph{Information theory and Statistics},
New York: Wiley--London: Chapman and Hall,
1959.

\bibitem{Mak80}
Gy.~Maksa,
\emph{Bounded symmetric information functions},
C.R. Math. Rep. Acad. Sci. Canada,
\textbf{2}
(1980), No. 5,
247--252.

\bibitem{RA99}
A.K.~Rajagopal, S.~Abe,
\emph{Implications of form invariance to the structure of nonextensive entropies},
Psys. Rev. Lett.,
\textbf{83}
(1999)
1711--1714.

\bibitem{Shi98}
M.~Shiino,
\emph{H-theorem with generalized relative entropies and the Tsallis statistics},
J.~Phys.~Soc. Japan,
\textbf{67}
(1998),
3658--3660.

\bibitem{Tsa88}
C.~Tsallis,
\emph{Possible generalization of Boltzmann-Gibbs statistics},
J.~Statist.~Phys.,
\textbf{52}
(1988),
no.~1-2,
479--487.

\bibitem{Tsa98}
C.~Tsallis,
\emph{Generalized entropy-based criterion for consistent testing},
Phys.~Rev.~E.,
\textbf{58}
(1998),
1442--1445.

\end{thebibliography}
\end{document}